\newcommand{\Z}{\mathbb{Z}}
\newcommand{\R}{\mathbb{R}}
\renewcommand{\phi}{\varphi}
\newcommand{\A}{{\mathbf A}}
\newcommand{\B}{{\mathbf B}}
\newcommand{\m}{{\mathbf m}}
\newcommand{\z}{{\mathbf z}}
\newcommand{\x}{{\mathbf x}}
\newcommand{\y}{{\mathbf y}}
\newcommand{\p}{{\mathbf p}}
\newcommand{\w}{{\mathbf w}}
\renewcommand{\v}{{\mathbf v}}
\newcommand{\e}{{\mathbf e}}
\newcommand{\0}{{\mathbf 0}}
\def\th{^{\text{th}}}
\newcommand{\la}{\lambda}
\newtheorem{theorem}{Theorem}
\newtheorem{lemma}[theorem]{Lemma}
\newtheorem{conjecture}[theorem]{Conjecture}
\newtheorem*{definition}{Definition}
\newcommand\comment[1]{}                
\renewcommand\comment[1]{\emph{[#1]}}           
\title{Mahonian Partition Identities Via Polyhedral Geometry}
\author{Matthias Beck}
\address{Department of Mathematics\\
         San Francisco State University\\
         San Francisco, CA 94132}
\email{beck@math.sfsu.edu}
\author{Benjamin Braun}
\address{Department of Mathematics\\
         University of Kentucky\\
         Lexington, KY 40506--0027}
\email{benjamin.braun@uky.edu}
\author{Nguyen Le}
\address{Department of Mathematics\\
         San Francisco State University\\
         San Francisco, CA 94132}
\email{nguyenle2005usa@yahoo.com}
\date{April 3, 2011}
\begin{document}

\begin{abstract}
In a series of papers, George Andrews and various coauthors successfully revitalized seemingly forgotten, powerful machinery based on MacMahon's $\Omega$ operator to systematically compute generating functions
$
  \sum_{ \la \in P } z_1^{ \la_1 } \cdots z_n^{ \la_n }
$
for some set $P$ of integer partitions $\la = (\la_1, \dots, \la_n)$. Our goal is to geometrically prove and extend many of the Andrews et al theorems, by realizing a given family of partitions as the set of integer lattice points in a certain polyhedron.
\end{abstract}

\dedicatory{Dedicated to the memory of Leon Ehrenpreis}

\thanks{We thank Carla Savage for pointing out several results in the literature that were relevant to our project.
This research was partially supported by the NSF through grants DMS-0810105 (Beck), DMS-0758321 (Braun), and DGE-0841164 (Le).}

\keywords{Partition identities, composition, MacMahon $\Omega$ operator, integer lattice point, polyhedral cone.}
\subjclass[2000]{Primary 11P84; Secondary 05A15, 05A17, 11P21.}

\maketitle


\section{Introduction}

In a series of papers starting with \cite{andrewspa1}, George Andrews and various coauthors successfully revitalized seemingly forgotten, powerful machinery based on MacMahon's $\Omega$ operator \cite{macmahon} to systematically compute generating functions related to various families of integer partitions.
Andrews et al's papers concern generating functions of the form
\[
  f_P (z_1, \dots, z_n) := \sum_{ \la \in P } z_1^{ \la_1 } \cdots z_n^{ \la_n }
  \qquad \text{ and } \qquad
  f_P (q) := f_P (q, \dots, q) = \sum_{ \la \in P } q^{ \la_1 + \dots + \la_n } ,
\]
for some set $P$ of \emph{partitions} $\la = (\la_1, \dots, \la_n)$; i.e., we think of the integers $\la_n \ge \dots \ge \la_1 \ge 0$ as the \emph{parts} when some integer $k$ is written as $k = \la_1 + \dots + \la_n$.
If we do not force an order onto the $\la_j$'s, we call $\la$ a \emph{composition} of $k$.
Below is a sample of some of these striking results.

\begin{theorem}[Andrews \cite{andrewsPA2}] \label{higherdiffthm}
Let
\[
  P_r := \left\{ \la : \, \sum_{ j=0 }^t (-1)^j \binom t j \la_{ k+j } \ge 0 \ \text{ \rm for } \ k \ge 1, \ 1 \le t \le r \right\} 
\]
(where we set undefined $\la_j$'s zero).
Then
\[
  f_{ P_r } (q) = \prod_{ j=1 }^\infty \frac{ 1 }{ 1 - q^{ \binom{ j+r-1 }{ r } } } \, .
\]
In words: the number of partitions of an integer $k$ satisfying the ``higher-order difference conditions" in $P_r$ equals the number of partitions of $k$ into parts that are $r$'th-order binomial coefficients.
\end{theorem}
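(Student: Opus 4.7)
The plan is to realize $P_r$ as the set of integer lattice points in a rational polyhedral cone $C_r$ cut out by the half-spaces $\sum_{i=0}^{t}(-1)^i\binom{t}{i}\la_{k+i}\ge 0$, and to exhibit a unimodular basis of extremal rays for $C_r$ whose weights (coordinate sums) are exactly $\binom{j+r-1}{r}$ for $j\ge 1$. Once such a basis $\v^{(1)},\v^{(2)},\dots$ is in hand, the lattice-point transform of $C_r$ factors as a product of geometric series, and specializing each $z_i$ to $q$ immediately gives
\[
  f_{P_r}(q) \;=\; \prod_{j\ge 1} \frac{1}{1-q^{\binom{j+r-1}{r}}}.
\]

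The shape of the target product suggests the candidate rays
\[
  \v^{(j)}_k \;:=\; \binom{j+r-1-k}{r-1}\quad(1\le k\le j), \qquad \v^{(j)}_k:=0\quad(k>j),
\]
whose coordinate sum is $\sum_{\ell=0}^{j-1}\binom{\ell+r-1}{r-1}=\binom{j+r-1}{r}$ by the hockey-stick identity. The first task is to verify $\v^{(j)}\in P_r$. On the ``polynomial'' region $k+t\le j$ this follows from the standard finite-difference identity $\sum_{i=0}^{t}(-1)^i\binom{t}{i}\binom{c-i}{d}=\binom{c-t}{d-t}$. On the boundary region $k\le j<k+t$, one recognizes the relevant sum as the $x^{j-k}$-coefficient of $(1-x)^t(1-x)^{-r}=(1-x)^{t-r}$, which equals $\binom{j-k+r-t-1}{r-t-1}\ge 0$ for $1\le t\le r-1$.

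The second task is to invert the map $(c_1,c_2,\dots)\mapsto \la:=\sum_j c_j\v^{(j)}$. The matrix $V=(\v^{(j)}_k)$ is upper triangular with diagonal entries $\binom{r-1}{r-1}=1$, hence unimodular, so every integer sequence $\la$ has a unique integer expansion in the $\v^{(j)}$. Using the Vandermonde-type identity $\sum_{i=0}^{m}(-1)^i\binom{r}{i}\binom{m-i+r-1}{r-1}=\delta_{m,0}$ (which is the same computation above at $t=r$), the inverse map comes out explicitly as
\[
  c_j \;=\; \sum_{i=0}^{r}(-1)^i\binom{r}{i}\,\la_{j+i},
\]
i.e.\ the $t=r$ inequality of $P_r$ evaluated at $k=j$. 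The remaining inequalities ($t<r$) are automatically inherited by nonnegative combinations of the $\v^{(j)}$ from the first task, so $\la\in P_r\cap\Z^\infty$ if and only if $c_j\in\Z_{\ge 0}$ for all $j$. The bijection $\la\leftrightarrow(c_1,c_2,\dots)$ preserves weight, yielding the product.

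The main obstacle I expect is the boundary analysis in the first task: because each $\v^{(j)}$ is only piecewise polynomial (a polynomial of degree $r-1$ on $\{1,\dots,j\}$, then zero), the routine finite-difference identity used in the interior does not cover the window in which the difference operator straddles the edge $k=j$. The key is to recast that boundary computation as a generating-function identity, after which the nonnegativity of the $x^m$-coefficient of $(1-x)^{t-r}$ for $0\le t\le r$ delivers the claim uniformly; everything else is then a unimodular change of coordinates whose weight-tracking gives the desired product formula.
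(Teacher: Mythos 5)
Your proposal is correct and is essentially the paper's own argument: your candidate rays $\v^{(j)}_k=\binom{j+r-1-k}{r-1}$ are exactly the columns of the upper-triangular generator matrix the paper obtains for the (truncated) cone $P_r^n$ via Lemma~\ref{linalglemma}, your inversion $c_j=\sum_{i=0}^r(-1)^i\binom{r}{i}\la_{j+i}$ is the same observation that the $t=r$ constraints form the inverse matrix, and the weight computation via the hockey-stick identity is the same specialization $z_1=\cdots=z_n=q$. The only cosmetic differences are that you work with infinite sequences directly rather than truncating and letting $n\to\infty$, and that you verify the $t<r$ inequalities on the generators (a cleaner justification, in fact, of the paper's unproved claim that the $t=r$ inequalities imply the rest).
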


\begin{theorem}[Andrews--Paule--Riese \cite{andrewspauleriesePA9}] \label{ngonpartitionthm} 
Let $n \ge 3$ and
\[
  \tau := \left\{ (\la_1, \dots , \la_n) \in \Z^n : \, \la_n \ge \cdots \ge \la_1 \ge 1 \ \text{ \rm and } \ \la_1 + \cdots + \la_{n-1} > \la_n \right\} ,
\]
the set of all ``$n$-gon partitions."
Then
\[
  f_{ \tau } (q) =  \frac {q^n}{(1-q)(1-q^2) \cdots (1-q^n)} - \frac{ q^{2n-2} }{ (1-q) (1- q^2) (1 - q^4) (1 - q^6) \cdots (1 - q^{2n-2}) } \, .
\]
More generally,
\begin{align*}
  f_{ \tau } (z_1, \dots, z_n) 
  &=  \frac {Z_1}{(1-Z_1)(1-Z_2) \cdots (1-Z_n)}\\
  &\qquad - \frac{ Z_1 Z_n^{n-2} }{ (1-Z_n) (1- Z_{n-1}) (1 - Z_{n-2} Z_n) (1 - Z_{n-3} Z_n^2) \cdots (1 - Z_1 Z_n^{n-2}) }
 \, ,
\end{align*}
where $Z_j := z_j z_{ j+1 } \cdots z_n \text { for } 1 \leq j \leq n$.
\end{theorem}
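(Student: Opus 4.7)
The plan is to realize $\tau$ as the set-theoretic difference $A \setminus B$, where $A = \{\la \in \Z^n : \la_n \ge \cdots \ge \la_1 \ge 1\}$ is the set of all (positive) $n$-part partitions and $B = \{\la \in A : \la_n \ge \la_1 + \cdots + \la_{n-1}\}$ is the subset that fails the ``$n$-gon'' inequality. Since $\tau$ is the complement of $B$ in $A$, we will have $f_\tau = f_A - f_B$, and both $A$ and $B$ turn out to be lattice points in shifted unimodular simplicial cones; each generating function is then an application of the standard formula
\[
\sum_{\la \,\in\, v + \sum_i \Z_{\ge 0}\, r_i} \z^{\la} \;=\; \frac{\z^v}{\prod_i (1 - \z^{r_i})}.
\]

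For $f_A$, I would use the well-known fact that the partition cone $\{\la_n \ge \cdots \ge \la_1 \ge 0\}$ is unimodular simplicial with primitive rays $\e_k + \e_{k+1} + \cdots + \e_n$ of $\z$-weight $Z_k$, and apex shift $(1,1,\ldots,1)$ of $\z$-weight $Z_1$. This immediately produces the first term $Z_1 / \prod_k (1 - Z_k)$.

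For $f_B$, the active constraints are $\la_1 \ge 1$, $\la_i \ge \la_{i-1}$ for $2 \le i \le n-1$, and $\la_n \ge \la_1 + \cdots + \la_{n-1}$; this is $n$ inequalities in $n$ variables and makes $B$ a simplicial cone (the nominally missing inequality $\la_n \ge \la_{n-1}$ is automatic since the other $\la_i$ are at least $1$). Its apex is $(1,1,\dots,1,n-1)$ with monomial weight $Z_1 Z_n^{n-2}$, and the slack variables $\mu_1 = \la_1 - 1$, $\mu_i = \la_i - \la_{i-1}$ for $2 \le i \le n-1$, and $\mu_n = \la_n - (\la_1 + \cdots + \la_{n-1})$ give a unimodular parameterization by $\Z_{\ge 0}^n$. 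Tracking how each $\mu_j$-direction moves $\la$, I compute that incrementing $\mu_j$ for $j < n$ raises $\la_j, \la_{j+1}, \dots, \la_{n-1}$ each by $1$ and raises $\la_n$ by $n-j$, for a $\z$-weight of $Z_j z_n^{n-j-1}$; the $\mu_n$-direction simply contributes $z_n = Z_n$. Reindexing $k = n-j$ produces exactly the denominator factors $(1-Z_{n-k} Z_n^{k-1})$ listed in the theorem, and the one-variable identity then follows by specializing $z_i = q$, whence $Z_j = q^{n-j+1}$.

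The main obstacle is verifying that the slack parameterization for $B$ is actually unimodular, i.e., that the change-of-variables matrix expressing $(\la_1, \dots, \la_n)$ in terms of $(\mu_1, \dots, \mu_n)$ has determinant $\pm 1$, so that no index correction $[\Z^n : \Lambda]$ appears and the formula above gives a single clean summand. This is a triangular linear-algebra check (the only nontrivial row is the one for $\la_n$), but it is the step that converts the geometric decomposition $\tau = A \setminus B$ into the stated rational expression; the remainder is careful bookkeeping of exponents to confirm the match with the $Z_j$-form in the statement.
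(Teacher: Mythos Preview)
Your proposal is correct and follows essentially the same route as the paper: both write $\tau = K_1 \setminus K_2$ with $K_1$ the open partition cone and $K_2$ the ``degenerate $n$-gon'' subcone, verify that each is unimodular simplicial, and read off the two rational generating functions via Lemma~\ref{conegenfctlemma}. Your slack-variable parameterization $(\mu_1,\dots,\mu_n)$ of $B$ is exactly the paper's generator-matrix description of $K_2$ (the lower-triangular matrix with last row $(n{-}1,n{-}2,\dots,1,1)$) written in coordinates, so the unimodularity check and the monomial bookkeeping coincide with the paper's computation.
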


The composition analogue of Theorem \ref{ngonpartitionthm} was inspired by a problem of Hermite \cite[Ex.~31]{polyaszego}, which is essentially the case $n=3$ of the following.

\begin{theorem}[Andrews--Paule--Riese \cite{andrewspauleriesePA3}] \label{hermitethm} 
Let
\[
  H := \left\{ (\la_1, \dots , \la_n) \in \Z_{ >0 }^n : \, \la_1 + \dots + \widehat{ \la_j } + \dots + \la_n \ge \la_j \ \text{ \rm for all } \ 1 \le j \le n \right\} \, .
\]
Then
\[
  f_H (q) = \frac{ q^n }{ (1-q)^n } - n \, \frac{ q^{ 2n-1 } }{ (1-q)^n (1+q)^{ n-1 } } \, .
\]
\end{theorem}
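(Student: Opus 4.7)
The plan is to realize $H$ as the integer lattice points in a polyhedral cone and peel off the ``violating'' pieces by inclusion-exclusion. For $1 \le j \le n$, set
\[
  N_j := \left\{ \la \in \Z_{>0}^n : \, \la_j > \la_1 + \dots + \widehat{\la_j} + \dots + \la_n \right\},
\]
so that $H$ is precisely the complement $\Z_{>0}^n \setminus \bigcup_{j=1}^n N_j$.

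The crucial observation---the main conceptual step of the proof---is that the sets $N_j$ are pairwise \emph{disjoint}. Indeed, if $\la \in N_j \cap N_k$ with $j \ne k$, then $\la_j > \sum_{i \ne j} \la_i \ge \la_k$ and, symmetrically, $\la_k > \la_j$, an absurdity. This collapses the sieve to a single subtraction:
\[
  f_H(q) = f_{\Z_{>0}^n}(q) - \sum_{j=1}^n f_{N_j}(q).
\]

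The first term is the standard geometric series $f_{\Z_{>0}^n}(q) = q^n/(1-q)^n$. For each $N_j$, symmetry in $q$ yields the same generating function, so it suffices to compute $f_{N_n}(q)$. I parameterize $N_n$ by introducing the slack variable $\mu := \la_n - (\la_1 + \dots + \la_{n-1}) \ge 1$, so that $(\la_1, \dots, \la_{n-1}, \mu)$ ranges freely over $\Z_{>0}^n$ and $\la_1 + \dots + \la_n = 2(\la_1 + \dots + \la_{n-1}) + \mu$. Hence
\[
  f_{N_n}(q) = \frac{q}{1-q} \left(\frac{q^2}{1-q^2}\right)^{n-1} = \frac{q^{2n-1}}{(1-q)(1-q^2)^{n-1}} = \frac{q^{2n-1}}{(1-q)^n (1+q)^{n-1}},
\]
after factoring $(1-q^2)^{n-1} = (1-q)^{n-1}(1+q)^{n-1}$. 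Summing $n$ equal copies and subtracting from $q^n/(1-q)^n$ yields exactly the identity in Theorem \ref{hermitethm}.

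The only nontrivial point is the disjointness of the $N_j$'s---that no integer point can simultaneously violate two ``triangle inequalities''---which is precisely what makes the polyhedral decomposition so clean. Everything else amounts to identifying $N_n$ as a unimodular simplicial cone and reading off its generating function via a change of variables. The same decomposition, tracked without the $q$-symmetry shortcut, should also yield the multivariate analogue, though the theorem as stated only records the single-variable case.
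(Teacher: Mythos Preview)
Your proof is correct and follows essentially the same route as the paper: the paper proves the multivariate Theorem~\ref{fullhermitethm} by writing the cone $K$ as the open orthant $O$ minus the pairwise-disjoint ``violation cones'' $C_k$ (your $N_k$), identifies each $C_k$ as the unimodular cone on generators $\e_k$ and $\e_j+\e_k$ for $j\ne k$ (which is exactly your slack-variable parameterization), and then specializes $z_1=\cdots=z_n=q$ to recover Theorem~\ref{hermitethm}. You simply carry out the same decomposition directly at the $q$-level rather than passing through the full generating function first.
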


A natural question is whether there exist ``full generating function" versions of Theorems \ref{higherdiffthm} and \ref{hermitethm}, in analogy with Theorem \ref{ngonpartitionthm}; we will show that such versions (Theorems \ref{higherdiffthmgeneralized} and \ref{fullhermitethm} below) follow effortlessly from our approach. (Xin \cite[Example 6.1]{xinmacmahon} previously computed a full-generating-function related to Theorem \ref{hermitethm}.)

Our main goal is to prove these theorems geometrically, and more, by realizing a given family of partitions as the set of integer lattice points in a certain polyhedron.
This approach is not new:
Pak illustrated in \cite{pakpartitionprocams,pakpartitionsurvey} how one can obtain bijective proofs by realizing when both sides of a partition identity are generating functions of lattice points in unimodular cones (which we will define below); this included most of the identities appearing in \cite{andrewsPA2}, including Theorem~\ref{higherdiffthm}.
Corteel, Savage, and Wilf \cite{corteelsavagewilf} implicitly used the extreme-ray description of a cone (see Lemma \ref{linalglemma} below) to derive product formulas for partition generating functions, including those appearing in \cite{andrewsPA2}.
Beck, Gessel, Lee, and Savage \cite{beckgesselleesavage} used triangulations of cones to extend results of Andrews, Paule, and Riese \cite{andrewspauleriesePA7} on ``symmetrically constrained compositions.''
However, we feel that each of these papers only scratched the surface of a polyhedral approach to partition identities, and we see the current paper as a further step towards a systematic study of this approach.

While the $\Omega$-operator approach to partition identities is elegant and powerful (not to mention useful in the search for such identities), we see several reasons for pursuing a geometric interpretation of these results.  
As discussed in \cite{corteelleesavage5guidelines}, partition analysis and the $\Omega$ operator are useful tools for studying partitions and compositions defined by linear constraints, which is equivalent to studying integer points in polyhedra.
An explicit geometric approach to these problems often reveals interesting connections to geometric combinatorics, such as the connections and conjectures discussed in Sections~\ref{cayleycomp} and~\ref{hypersimplices} below.
Also, one of the great appeals of partition analysis is that it is automatic; Andrews discusses this in the context of applying the $\Omega$ operator to the 4-dimensional case of lecture-hall partitions in \cite{andrewspa1}:
\begin{quote}
The point to stress here is that we have carried off the case $j=4$ with no effective combinatorial argument or knowledge.
In other words, the entire problem is reduced by Partition Analysis to the factorization of an explicit polynomial.
\end{quote}
As we hope to show, the geometric perspective can often provide a clear view of sometimes mysterious formulas that arise from the symbolic manipulation of the $\Omega$ operator.


\section{Polyhedral cones and their lattice points}\label{latticepointsection}


We use the standard abbreviation $\z^\m := z_1^{ m_1 } \cdots z_n^{ m_n }$ for two vectors $\z$ and $\m$.  
Given a subset $K$ of $\R^n$, the \emph{(integer-point) generating function} of $K$ is
\[
  \sigma_K (z_1, \dots, z_n) := \sum_{ \m \in K \cap \Z^n } \z^\m \, .
\]
We will often encounter subsets that are cones, where a \emph{(polyhedral) cone} $C$ is the intersection of finitely many (open or closed) halfspaces whose bounding hyperplanes contain the origin.
(Thus the cones appearing in this paper will not all be closed but in general partially open.)
A closed cone has the alternative description (and this equivalence is nontrivial \cite{ziegler}) as the nonnegative span of a finite set of vectors in $\R^n$, the \emph{generators} of~$C$.

An $n$-dimensional cone in $\R^n$ is \emph{simplicial} if we only need $n$ halfspaces to describe it.
All of our cones will be \emph{pointed}, i.e., they do not contain lines.
The following exercise in linear algebra shows how to switch between the generator and halfspace descriptions of a simplicial cone.

\begin{lemma}\label{linalglemma}
Let $\A$ be the inverse matrix of $\B \in \R^{ n \times n }$. Then
\[
  \left\{ \x \in \R^n : \, \A \, \x \ge \0 \right\} = \left\{ \B \, \y : \, \y \ge \0 \right\} ,
\]
where each inequality is understood componentwise.
\end{lemma}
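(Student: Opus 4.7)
The plan is to prove set equality by establishing both containments via direct computation, using only the defining identity $\A\B = \B\A = I$. The only linear-algebraic content is the observation that $\B$ being invertible means $\x \mapsto \A\x$ is a bijection between $\R^n$ and itself, with inverse $\y \mapsto \B\y$; the lemma just says this bijection carries the halfspace cone $\{\A\x \ge \0\}$ to the generator cone $\{\B\y : \y \ge \0\}$.

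For the containment $\{\B\y : \y \ge \0\} \subseteq \{\x : \A\x \ge \0\}$, I would take an arbitrary $\x = \B\y$ with $\y \ge \0$ and left-multiply by $\A$ to obtain $\A\x = \A\B\y = \y \ge \0$. For the reverse containment $\{\x : \A\x \ge \0\} \subseteq \{\B\y : \y \ge \0\}$, given $\x$ with $\A\x \ge \0$, I would set $\y := \A\x$; this choice is nonnegative by hypothesis, and $\B\y = \B\A\x = \x$, exhibiting $\x$ in the right-hand set.

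The only real ``step'' is recognizing, in the reverse direction, that the candidate $\y$ is forced to be $\A\x$ by the equation $\x = \B\y$ together with invertibility of $\B$. Since there is no genuine obstacle beyond this observation, I expect the proof to be essentially two lines once the setup is written down, with the componentwise inequalities $\A\x \ge \0$ and $\y \ge \0$ transferring across the equality $\y = \A\x$ without any additional argument.
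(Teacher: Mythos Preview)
Your argument is correct. The paper itself does not supply a proof of this lemma---it is introduced as ``the following exercise in linear algebra'' and stated without proof---so there is nothing to compare against; your two-containment argument via $\A\B=\B\A=I$ is exactly the intended routine verification.
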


The (integer-point) generating function of a simplicial cone $C \subset \R^n$ can be computed from first principles when $C$ is \emph{rational}, i.e., its generators can be chosen in $\Z^n$. 
A closed cone $C$ is \emph{unimodular} if its generators form a basis of $\Z^n$; for unimodular cones, which is all we will need in what follows, we have the following simple lemma (for much more general results, see, e.g., \cite[Chapter 3]{ccd}).

\begin{lemma}\label{conegenfctlemma}
Suppose $C = \sum_{ j=1 }^{ k } \R_{ \ge 0 } \v_j + \sum_{ i=k+1 }^n \R_{ >0 } \v_i$ is a unimodular cone in $\R^n$ generated by $\v_1, \dots, \v_n \in \Z^n$. 
Then
\[
  \sigma_C (z_1, \dots, z_n) = \frac{ \prod_{i=k+1}^n \z^{\v_i} }{ \prod_{ j=1 }^n \left( 1 - \z^{ \v_j } \right) } \, .
\]
\end{lemma}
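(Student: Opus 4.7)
The plan is to write every integer point in $C$ uniquely as an integer combination of the generators and then evaluate a product of geometric series. Because $C$ is unimodular, the vectors $\v_1,\dots,\v_n$ form a $\Z$-basis of $\Z^n$; consequently every $\m \in \Z^n$ has a \emph{unique} representation $\m = \sum_{j=1}^n a_j \v_j$ with $a_j \in \Z$. Under the halfspace description of $C$ (obtained from Lemma~\ref{linalglemma} applied to the matrix with columns $\v_1,\dots,\v_n$), the point $\m$ lies in $C$ precisely when $a_j \ge 0$ for $1 \le j \le k$ and $a_i \ge 1$ for $k+1 \le i \le n$, reflecting that those generators' rays are closed resp.\ open.

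With this bijection between $C \cap \Z^n$ and $\Z_{\ge 0}^k \times \Z_{\ge 1}^{n-k}$, the generating function factors:
\[
  \sigma_C(\z) \;=\; \sum_{\substack{a_1,\dots,a_k \ge 0 \\ a_{k+1},\dots,a_n \ge 1}} \z^{\sum_j a_j \v_j} \;=\; \prod_{j=1}^{k} \sum_{a_j \ge 0} \bigl(\z^{\v_j}\bigr)^{a_j} \, \prod_{i=k+1}^{n} \sum_{a_i \ge 1} \bigl(\z^{\v_i}\bigr)^{a_i}.
\]
Summing each geometric series and combining the $\z^{\v_i}$ numerators from the open factors yields exactly the claimed formula. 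To make the geometric-series step rigorous, I would restrict $\z$ to the nonempty open region where $|\z^{\v_j}| < 1$ for all $j$ (which exists because the $\v_j$ span $\R^n$ and $C$ is pointed, so one can find $\z$ with $\log|\z|$ in the interior of the dual cone) and then regard the resulting rational function as the analytic continuation, or equivalently as the identity in the appropriate ring of Laurent series associated with $C$.

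The only real content beyond bookkeeping is the unimodularity step: absent the basis property, the map $(a_1,\dots,a_n) \mapsto \sum a_j \v_j$ would fail to hit every lattice point (or would double-count fundamental parallelepiped representatives), and one would need to introduce the finite sum over lattice points in the half-open fundamental parallelepiped as in the general cone formula from \cite{ccd}. I expect the main (and very mild) obstacle to be simply stating the convergence/formal-series conventions cleanly, since everything else reduces to a one-line linear algebra observation followed by a product of geometric series.
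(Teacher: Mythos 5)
Your proof is correct and is exactly the standard argument the paper has in mind: the paper states this as a ``simple lemma'' without proof (deferring to \cite{ccd}), and the intended justification is precisely your observation that unimodularity makes $\v_1,\dots,\v_n$ a $\Z$-basis, so each lattice point of $C$ corresponds uniquely to a tuple in $\Z_{\ge 0}^k \times \Z_{\ge 1}^{n-k}$ and the generating function factors into geometric series. Your handling of the half-open generators and of the convergence conventions is also the right level of care; nothing is missing.
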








\section{Unimodular Cones}

Recall from Theorem \ref{higherdiffthm} that
\[
  P_r = \left\{ \la : \, \sum_{ j=0 }^t (-1)^j \binom t j \la_{ k+j } \ge 0 \ \text{ \rm for } \ k \ge 1, \ 1 \le t \le r \right\} 
\]
(where we set undefined $\la_j$'s zero).
Let
\[
  P_r^n := \left\{ (\la_1, \dots , \la_n) \in \Z^n : \, \sum_{ j=0 }^t (-1)^j \binom t j \la_{ k+j } \ge 0 \ \text{ \rm for } \ 1 \le k \le n, \ 1 \le t \le r \right\} 
\]
consist of all partitions in $P_r$ with at most $n$ parts.
As a warm-up example we will compute the (full) generating function of $P_r^n$:

\begin{theorem}\label{higherdiffthmgeneralized}
\begin{align*}
  &f_{ P_r^n } (z_1, \dots, z_n) = \\
  &\frac{ 1 }{ \left( 1 - z_1 \right) \left( 1 - z_1^r z_2  \right) \left( 1 - z_1^{ \binom{ r+1 }{ r-1 } } z_2^r z_3 \right) \left( 1 - z_1^{ \binom{ r+2 }{ r-1 } } z_2^{ \binom{ r+1 }{ r-1 } } z_3^r z_4 \right) \cdots \left( 1 - z_1^{ \binom{ r+n-2 }{ r-1 } } z_2^{ \binom{ r+n-3 }{ r-1 } } \cdots z_{ n-1 }^r z_n \right) } \, .
\end{align*}
\end{theorem}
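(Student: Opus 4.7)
My plan is to identify $P_r^n$ with a closed unimodular cone $C$ and apply Lemma~\ref{conegenfctlemma}. The form of the claimed denominator already points to the generators: for $1 \le k \le n$, define $\v_k \in \Z^n$ by $(\v_k)_j := \binom{r+k-1-j}{r-1}$ for $1 \le j \le k$ and $(\v_k)_j := 0$ for $j > k$, so that $\z^{\v_k}$ is precisely the $k$-th monomial in the denominator. The $n \times n$ matrix $\B := [\v_1 \mid \cdots \mid \v_n]$ is upper triangular with diagonal entries $\binom{r-1}{r-1}=1$, so $\det \B = 1$ and $\{\v_1,\dots,\v_n\}$ is a $\Z$-basis of $\Z^n$; therefore $C := \sum_{k=1}^n \R_{\ge 0}\v_k$ is a closed unimodular cone. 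It then remains to prove $P_r^n = C$.

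For the inclusion $C \subseteq P_r^n$, linearity reduces the task to showing that each generator $\v_m$ satisfies every defining inequality $\sum_{j=0}^t (-1)^j \binom{t}{j} \la_{k+j} \ge 0$ of $P_r^n$. Writing $s := m - k$, the case $s < 0$ is trivial since $\v_m$ is supported on its first $m$ coordinates; the case $s = 0$ evaluates to $\binom{t}{0}(\v_m)_m = 1$; and for $s \ge 1$ the expression becomes $\sum_{j=0}^t (-1)^j \binom{t}{j} \binom{r+s-1-j}{r-1}$, which by the standard identity $\Delta^t g(x) = \binom{x}{r-1-t}$ for the polynomial $g(x) := \binom{x}{r-1}$ equals $\binom{r+s-1-t}{r-1-t}$ when $t < r$ and $0$ when $t = r$. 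In every case, the result is nonnegative.

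For the reverse inclusion $P_r^n \subseteq C$, Lemma~\ref{linalglemma} rewrites $C$ as $\{\la : \A \la \ge \0\}$ with $\A := \B^{-1}$. I would check by direct computation that $\A_{k,j} = (-1)^{j-k}\binom{r}{j-k}$ for $k \le j \le \min(k+r, n)$ and $0$ otherwise; the verification $\A \B = I$ again uses $\Delta^r g \equiv 0$, supplemented by a boundary evaluation at $g(-1) = (-1)^{r-1}$ that supplies the diagonal entry $1$. Unpacking $\A \la \ge \0$ then recovers exactly the $t = r$ subfamily of defining inequalities of $P_r^n$ (with the convention $\la_j := 0$ for $j > n$), so every $\la \in P_r^n$ satisfies $\A \la \ge \0$.

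Once $P_r^n = C$ is established, Lemma~\ref{conegenfctlemma} immediately yields $f_{P_r^n}(\z) = \prod_{k=1}^n (1 - \z^{\v_k})^{-1}$ with no numerator (since $C$ is closed), which is the claimed formula. The main technical obstacle is the boundary case $s = 0$, where a formal application of the finite-difference identity picks up the spurious value $g(-1) = (-1)^{r-1}$; bypassing this via direct evaluation is the small but essential trick.
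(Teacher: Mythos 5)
Your proposal is correct and takes essentially the same route as the paper: identify $P_r^n$ with the closed unimodular cone generated by the columns of the upper-triangular binomial matrix $\B$ (equivalently, the cone cut out by the $t=r$ inequalities, whose inverse matrix $\A$ you exhibit via Lemma~\ref{linalglemma}) and apply Lemma~\ref{conegenfctlemma}. The only difference is one of detail: the paper dismisses the claim that the $t=r$ inequalities imply those for $t<r$ as ``easy to see,'' whereas you verify it by the finite-difference computation on the generators, including the $s=0$ boundary case.
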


\noindent
Note that Theorem \ref{higherdiffthm} follows upon setting $z_1 = \dots = z_n = q$, using the identity
\[
  \binom{ r+j-2 }{ r-1 } + \binom{ r+j-3 }{ r-1 } + \dots + r + 1 = \binom{ r+j-1 }{ r } \, ,
\]
and taking $n \to \infty$.

\begin{proof}
It is easy to see that the inequalities
\[
  \sum_{ j=0 }^t (-1)^j \binom t j \la_{ k+j } \ge 0 \ \text{ \rm for } \ 1 \le k \le n, \ 1 \le t \le r \, ,
\]
which define $P_r^n$, are implied by the inequalities for $t=r$.
Thus the cone containing $P_r^n$ as its integer lattice points is
\begin{align*}
  K &:= \left\{ (x_1, \dots , x_n) \in \R^n : \, \sum_{ j=0 }^r (-1)^j \binom r j x_{ k+j } \ge 0 \ \text{ \rm for } \ 1 \le k \le n \right\} \\
    &= \left\{ 
  \begin{bmatrix}
  1 & r & \binom{ r+1 }{ r-1 } & \binom{ r+2 }{ r-1 } & \cdots & \binom{ r+n-2 }{ r-1 } \\
  0 & 1 & r & \binom{ r+1 }{ r-1 } & \cdots & \binom{ r+n-3 }{ r-1 } \\
  0 & 0 & 1 & r & \cdots & \binom{ r+n-4 }{ r-1 } \\
  \vdots & & \ddots & \ddots & \ddots &\vdots  \ \\ 
  0 & & & 0 & 1 & r \\
  0 &  & \cdots & & 0 &1 \ 
  \end{bmatrix}
  \y : \, y_1, \dots, y_n \ge 0 \right\} 
\end{align*}
(whose generators we can compute, e.g., with the help of Lemma \ref{linalglemma}).
Thus $K$ is unimodular and, by Lemma~\ref{conegenfctlemma},
\begin{align*}
  &\sigma_K (z_1, \dots, z_n) = \\
  &\frac{ 1 }{ \left( 1 - z_1 \right) \left( 1 - z_1^r z_2  \right) \left( 1 - z_1^{ \binom{ r+1 }{ r-1 } } z_2^r z_3 \right) \left( 1 - z_1^{ \binom{ r+2 }{ r-1 } } z_2^{ \binom{ r+1 }{ r-1 } } z_3^r z_4 \right) \cdots \left( 1 - z_1^{ \binom{ r+n-2 }{ r-1 } } z_2^{ \binom{ r+n-3 }{ r-1 } } \cdots z_{ n-1 }^r z_n \right) } \, .
\end{align*}
\end{proof}

The idea behind this approach towards Theorem \ref{higherdiffthm} can be found, in disguised form, in \cite{corteelsavagewilf} and \cite{pakpartitionprocams}.
See also \cite{canfieldcorteelhitczenko,corteelsavageinequalities} for bijective approaches to Theorem \ref{higherdiffthm} and its asymptotic consequences.
We included this proof here in the interest of a self-contained exposition and also because none of \cite{andrewsPA2,corteelsavagewilf,pakpartitionprocams} contains a full generating function version of (analogues of) Theorem~\ref{higherdiffthm}.


\section{Differences of two cones}

They key idea behind the proof of Theorem~\ref{ngonpartitionthm} is to observe that the non-simplicial cone
\[
  K := \left\{ (x_1, \dots , x_n) \in \R^n : \, x_n \ge \cdots \ge x_1 > 0 \ \text{ and } \ x_1 + \cdots + x_{n-1} > x_n \right\} ,
\]
whose integer lattice points form Andrews--Paule--Riese's set $\tau$ of $n$-gon partitions, can be written as a difference $K = K_1 \setminus K_2$ of two simplicial cones.
Specifically, set
\begin{align*}
  K_1 &:= \left\{ (x_1, \dots , x_n) \in \R^n : \, x_n \ge \cdots \ge x_1 > 0 \right\} \\
    &= \left\{ 
  \begin{bmatrix}
  1&0&0&\cdots&\ 0 \quad &0 \ \\
  1&1&0&\cdots&\ 0 \quad &0 \ \\
  1&1&1&\cdots&\ 0 \quad &0 \ \\ 
  \vdots&\vdots&\vdots&\ddots&\ \vdots \quad &\vdots  \ \\ 
  1&1&1&\cdots&\ 1 \quad &0 \ \\
  1&1&1&\cdots&\ 1 \quad &1 \ 
  \end{bmatrix}
  \y : \begin{array}{l}
    y_1 > 0 \, , \\
    y_2, \dots, y_n \ge 0
  \end{array} \right\} 
\end{align*}
and
\begin{align*}
  K_2 &:= \left\{ (x_1, \dots , x_n) \in \R^n : \, x_n \ge \cdots \ge x_1 > 0 \ \text{ and } \ x_1 + \cdots + x_{n-1} \le x_n \right\} \\
  &= \left\{ 
  \begin{bmatrix}
  1&0&0&\cdots&\ 0 \quad &0 \ \\
  1&1&0&\cdots&\ 0 \quad &0 \ \\
  1&1&1&\cdots&\ 0 \quad &0 \ \\ 
  \vdots&\vdots&\vdots&\ddots&\ \vdots \quad &\vdots  \ \\ 
  1&1&1&\cdots&\ 1 \quad &0 \ \\
  n-1&n-2&n-3&\cdots&\ 1 \quad &1 \ 
  \end{bmatrix}
  \y : \begin{array}{l}
    y_1 > 0 \, , \\
    y_2, \dots, y_n \ge 0
  \end{array} \right\} 
\end{align*}
(whose generators we can compute, e.g., with the help of Lemma \ref{linalglemma}).
One can see immediately from the generator matrices that both $K_1$ and $K_2$ are unimodular.
(In a geometric sense, this is suggested by the form of the identity in Theorem \ref{ngonpartitionthm}. A similar simplification-through-taking-differences phenomenon is described in the fifth ``guideline" of Corteel, Lee, and Savage \cite{corteelleesavage5guidelines}, which inspired our proof.)
By Lemma~\ref{conegenfctlemma}
\[
  \sigma_{K_1} (z_1, \dots, z_n) = \frac{ z_1 \cdots z_n }{ (1 - z_n) (1 - z_{ n-1 } z_n) \cdots (1 - z_1 \cdots z_n)}
\]
and
\begin{align*}
  &\sigma_{K_2} (z_1, \dots, z_n) \\
  &\qquad = \frac{ z_1 \cdots z_{ n-1 } z_n^{ n-1 } }{ \left(1 - z_1 \cdots z_{ n-1 } z_n^{ n-1 } \right) \left(1 - z_2 \cdots z_{ n-1 } z_n^{ n-2 } \right) \left(1 - z_3 \cdots z_{ n-1 } z_n^{ n-3 } \right) \cdots \left(1 - z_{ n-1 } z_n \right) \left(1 - z_n \right) } \\
  &\qquad = \frac{ Z_1 Z_n^{n-2} }{ (1-Z_n) (1- Z_{n-1}) (1 - Z_{n-2} Z_n) (1 - Z_{n-3} Z_n^2) \cdots (1 - Z_1 Z_n^{n-2}) } \, ,
\end{align*}
and the identity $\sigma_K (z_1, \dots, z_n) = \sigma_{K_1} (z_1, \dots, z_n) - \sigma_{K_2} (z_1, \dots, z_n)$ completes the proof.
\hfil $\Box$


\section{Differences of multiple cones}

The ``cone behind" Theorem~\ref{hermitethm} is
\[
  K := \left\{ (x_1, \dots , x_n) \in \R_{ >0 }^n : \, x_j \le x_1 + \dots + \widehat{ x_j } + \dots + x_n \ \text{ for all } \ 1 \le j \le n \right\} ;
\]
Theorem \ref{hermitethm} follows from the following result upon setting $z_1 = \dots = z_n = q$.

\begin{theorem}\label{fullhermitethm}
\[
  \sigma_K (z_1, \dots, z_n) = \frac{ z_1 \cdots z_n }{ (1-z_1) \cdots (1-z_n) } - \sum_{ k=1 }^n \frac{ z_1 \cdots z_{ k-1 } z_k^n z_{ k+1 } \cdots z_n }{ (1-z_k) \prod_{ {j=1} \atop {j \ne k} }^n (1 - z_k z_j) } \, .
\]
\end{theorem}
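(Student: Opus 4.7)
The plan is to imitate the strategy behind Theorem \ref{ngonpartitionthm}, but with $n$ ``bad'' subcones to subtract rather than just one. Let $K_0 := \R_{>0}^n$ denote the open positive orthant and, for each $k \in \{1, \dots, n\}$, set
\[
  B_k := \bigl\{ \x \in \R_{>0}^n : x_k > x_1 + \dots + \widehat{x_k} + \dots + x_n \bigr\} .
\]
The cone $K_0$ is unimodular (generated by $\e_1, \dots, \e_n$ with all rays strict), so Lemma \ref{conegenfctlemma} immediately gives $\sigma_{K_0}(\z) = z_1 \cdots z_n / \prod_j (1 - z_j)$, which is the first term in the desired formula. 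My goal is then to establish the disjoint decomposition $K_0 = K \sqcup B_1 \sqcup \cdots \sqcup B_n$ and to identify each $\sigma_{B_k}$ with the corresponding summand on the right-hand side.

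For the decomposition, every $\x \in K_0$ either satisfies all the defining inequalities of $K$ or violates $x_k \le \sum_{j \ne k} x_j$ for at least one $k$, in which case $\x \in B_k$. Pairwise disjointness is the key geometric observation: if $\x \in B_k \cap B_\ell$ with $k \ne \ell$, then, using $x_j > 0$ for all $j$,
\[
  x_k > \sum_{j \ne k} x_j \ge x_\ell > \sum_{j \ne \ell} x_j \ge x_k ,
\]
a contradiction. Hence $\sigma_{K_0} = \sigma_K + \sum_{k=1}^n \sigma_{B_k}$.

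To evaluate $\sigma_{B_k}$ I would reduce to the case $k = n$ by relabeling coordinates. The defining system of $B_n$ consists of $x_j > 0$ for $j < n$ together with $x_n - \sum_{j < n} x_j > 0$; Lemma \ref{linalglemma}, applied to the unit lower-triangular constraint matrix whose bottom row is $(-1, \dots, -1, 1)$, produces the explicit generators $\v_j = \e_j + \e_n$ for $j = 1, \dots, n-1$ and $\v_n = \e_n$. These form a $\Z$-basis of $\Z^n$, so $B_n$ is unimodular, and since every defining inequality is strict Lemma \ref{conegenfctlemma} yields
\[
  \sigma_{B_n}(\z) \;=\; \frac{ z_n \prod_{j < n} z_j z_n }{ (1 - z_n) \prod_{j < n}(1 - z_j z_n) } \;=\; \frac{ z_1 \cdots z_{n-1} z_n^n }{ (1 - z_n) \prod_{j \ne n}(1 - z_n z_j) } ,
\]
which is exactly the $k = n$ summand of the theorem; the remaining $k$ follow by symmetry.

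The only step requiring any real thought is the pairwise-disjointness chain above: it is what generalizes the ``difference of two cones'' trick of the previous section to a difference of $n + 1$ cones, and it explains geometrically why the formula carries $n$ subtracted terms rather than one. The rest of the argument is a mechanical application of Lemmas \ref{linalglemma} and \ref{conegenfctlemma}.
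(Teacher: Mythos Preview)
Your proof is correct and follows essentially the same route as the paper: write $K$ as the open positive orthant minus the $n$ pairwise disjoint ``bad'' cones $B_k$, identify the generators $\e_k$ and $\e_j+\e_k$ ($j\ne k$) of each $B_k$, and apply Lemma~\ref{conegenfctlemma}. You supply a bit more detail than the paper (the explicit inequality chain for disjointness and the use of Lemma~\ref{linalglemma} to find the generators), but the underlying argument is the same.
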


\begin{proof}
Let $\e_j$ denote the $j$th unit vector in $\R^n$.
Observe that the non-simplicial cone $K$ is expressible as a difference 
$
  K = O \setminus \bigcup_{ k=1 }^n C_k \, ,
$
where $O := \sum_{ j=1 }^n \R_{ >0 } \, \e_j$ and $C_k$ is the cone
\begin{align*}
C_k &:= \left\{ (x_1, \dots , x_n) \in \R_{ >0 }^n : \, x_k > x_1 + \dots + \widehat{ x_j } + \dots + x_n  \right\} \\
&= \R_{ >0 } \, \e_k + \sum_{ {j=1} \atop {j \ne k} }^n \R_{ >0 } \left( \e_j + \e_k \right) \\
\end{align*}
Note that if $i\neq j$, then $C_i\cap C_j=\emptyset$.
Thus, the closure of $K$ is ``almost'' the positive orthant $O$, except that we have to exclude points in $O$ that can only be written as a linear combination that requires a single $\e_k$ (as opposed to a linear combination of the vectors $\e_j + \e_k$).
(A similar simplification-through-taking-differences phenomenon appeared in the original proof of Theorem \ref{hermitethm}.)
In generating-function terms, this set difference gives, by Lemma~\ref{conegenfctlemma},
\begin{align*}
  \sigma_K (z_1, \dots, z_n)
  &= \sigma_O (z_1, \dots, z_n) - \sum_{ k=1 }^n \sigma_{ C_k } (z_1, \dots, z_n) \\
  &= \frac{ z_1 \cdots z_n }{ (1-z_1) \cdots (1-z_n) } - \sum_{ k=1 }^n \frac{ z_1 \cdots z_{ k-1 } z_k^n z_{ k+1 } \cdots z_n }{ (1-z_k) \prod_{ {j=1} \atop {j \ne k} }^n (1 - z_k z_j) } \, . \qedhere
\end{align*}
\end{proof}

Three remarks on this theorem are in order. 
First, as already mentioned, Xin \cite[Example 6.1]{xinmacmahon} previously computed a different full-generating-function related to Theorem \ref{hermitethm}; Xin's generating function handles non-negative, rather than positive, $k$-gon partitions.
Second, the cone $K$ is related to the second hypersimplex, a well-known object in geometric combinatorics (see Section~\ref{hypersimplices} for more details).

Third, $K$ is a suitable candidate for the ``symmetrically constrained" approach in \cite{beckgesselleesavage}; however, one should expect that this approach would give a different form for the generating function $\sigma_K (z_1, \dots, z_n)$ from the one given in Theorem~\ref{fullhermitethm}.
The symmetrically constrained approach produces a triangulation of the cone $K$ that is invariant under permutation of the standard basis vectors in $\R^n$, and then uses this triangulation to express $\sigma_K(z_1,\dots,z_n)$ as a positive sum of rational generating functions for these cones (after some geometric shifting).
The terms in this sum will all have $\frac{1}{1-z_1 z_2 \cdots z_n}$ as a factor, as each of the simplicial cones in the triangulation of $K$ will have the all-ones vector as a ray generator; this will clearly produce a different form from that in Theorem~\ref{fullhermitethm}.



\section{Cayley Compositions}\label{cayleycomp}

A \emph{Cayley composition} is a composition $\la=\left( \la_1, \dots, \la_{j-1} \right)$ that satisfies $1 \le \la_1 \le 2$ and $1 \le \la_{ i+1 } \le 2 \la_i$ for $1 \le i \le j-2$.
Thus, the Cayley compositions with $j-1$ parts are precisely the integer points in
\[
  C_{j} := \left\{ (\la_1, \dots , \la_{j-1}) \in \Z_{ >0 }^{j-1} : \, \la_1 \leq 2 \ \text{ \rm and } \ \la_i \leq 2\la_{i-1} \ \text{ \rm for all } \ 2 \le i \le j-1 \right\} \, .
\]
Our apparent shift in indexing maintains continuity between our statements and \cite{andrewspauleriesestrehlPA5}, where Cayley compositions always begin with a $\la_0=1$ part.
Let $f_{C_j} \left( z_1, \dots, z_{j-1} \right)$ be the generating function for $C_j$.
The following theorem is quite surprising.

\begin{theorem}[Andrews--Paule--Riese--Strehl \cite{andrewspauleriesestrehlPA5}] \label{cayley} 
Let
\[
  C_{j} := \left\{ (\la_1, \dots , \la_{j-1}) \in \Z_{ >0 }^{j-1} : \, \la_1 \leq 2 \ \text{ \rm and } \ \la_i \leq 2\la_{i-1} \ \text{ \rm for all } \ 2 \le i \le j-1 \right\} \, .
\]
Then for $j\geq 2$,
\begin{align*}
f_{C_{j}}(1,1,\ldots,1,q) = & \sum_{h=1}^{j-2}\frac{b_{j-h-1}(-1)^{h-1}q^{2^h-1}}{(1-q)(1-q^2)(1-q^4)\cdots(1-q^{2^{h-1}})} \\
&
 +  \frac{(-1)^jq^{2^{j-1}-1}(1-q^{2^{j-1}})}{(1-q)(1-q^2)(1-q^4)\cdots(1-q^{2^{j-2}})}
\end{align*}
where $b_k$ is the coefficient of $q^{2^k-1}$ in the power series expansion of 
\[
\frac{1}{1-q}\prod_{m=0}^\infty\frac{1}{1-q^{2^m}} 
\, .
\]
\end{theorem}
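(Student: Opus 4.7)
Define $a_j(q) := f_{C_j}(1,\ldots,1,q)$. The plan is to exhibit a polyhedral fibration of $C_j$ over $C_{j-1}$, extract a one-step recursion for $a_j$, iterate it, and match the output to the theorem's formula using one classical input. The projection $\pi:(\lambda_1,\dots,\lambda_{j-1}) \mapsto (\lambda_1,\dots,\lambda_{j-2})$ sends $C_j \cap \Z^{j-1}$ onto $C_{j-1} \cap \Z^{j-2}$, and the fiber over $\mu = (\lambda_1,\dots,\lambda_{j-2})$ is the integer interval $\{1,2,\dots,2\lambda_{j-2}\}$. Summing $q^{\lambda_{j-1}}$ along a fiber gives $\frac{q(1-q^{2\lambda_{j-2}})}{1-q}$, and then summing over $\mu \in C_{j-1} \cap \Z^{j-2}$ yields the recursion
\[
a_j(q) \;=\; \frac{q}{1-q}\,\bigl[\,a_{j-1}(1) \,-\, a_{j-1}(q^2)\,\bigr].
\]

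First I would iterate this recursion, applying it in turn to $a_{j-1}(q^2)$, then $a_{j-2}(q^4)$, and so on. Each application contributes an alternating sign together with a factor $q^{2^{h-1}}/(1-q^{2^{h-1}})$, and the telescoping identity $1+2+\cdots+2^{h-1} = 2^h-1$ assembles the exponents. After $j-2$ iterations one reaches
\[
a_j(q) \;=\; \sum_{h=1}^{j-2} \frac{(-1)^{h-1} q^{2^h-1}\, a_{j-h}(1)}{(1-q)(1-q^2)\cdots(1-q^{2^{h-1}})} \;+\; \frac{(-1)^{j-2} q^{2^{j-2}-1}\, a_2(q^{2^{j-2}})}{(1-q)(1-q^2)\cdots(1-q^{2^{j-3}})}.
\]

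Next I would rewrite the boundary term using the base case $a_2(q) = q+q^2$ (the two length-$1$ Cayley compositions are $\lambda_1 \in \{1,2\}$), so that $a_2(q^{2^{j-2}}) = q^{2^{j-2}}(1+q^{2^{j-2}})$. Multiplying numerator and denominator by $1-q^{2^{j-2}}$ and applying $(1+x)(1-x)=1-x^2$ with $x = q^{2^{j-2}}$ converts the boundary term into $\dfrac{(-1)^j\, q^{2^{j-1}-1}(1-q^{2^{j-1}})}{(1-q)(1-q^2)\cdots(1-q^{2^{j-2}})}$, exactly the final summand of the theorem (note $(-1)^{j-2}=(-1)^j$).

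The remaining — and main — step is to identify $a_{j-h}(1)$, i.e.\ the total number of $(j-h-1)$-part Cayley compositions, with the coefficient $b_{j-h-1}$ defined via $\frac{1}{1-q}\prod_{m\ge 0}(1-q^{2^m})^{-1}$. This is Cayley's classical enumeration of Cayley compositions: the number of $m$-part Cayley compositions equals $[q^{2^m-1}]\,\frac{1}{1-q}\prod_{k\ge 0}(1-q^{2^k})^{-1}$. I would invoke this as a known result. Establishing and iterating the recursion is essentially mechanical; the identification with the binary-partition generating-function coefficients is the one genuinely nontrivial ingredient and thus the main obstacle to a fully self-contained polyhedral proof.
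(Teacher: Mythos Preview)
Your proposal is correct and follows essentially the same route the paper describes: your fibration argument for the recursion $a_j(q)=\frac{q}{1-q}\bigl(a_{j-1}(1)-a_{j-1}(q^2)\bigr)$ is exactly the paper's ``second proof'' of Theorem~\ref{recurrence} (specialized to $z_1=\cdots=z_{j-2}=1$, $z_{j-1}=q$), and the subsequent iteration, boundary-term manipulation, and appeal to Cayley's identification $|C_{m+1}|=b_m$ are precisely how the paper says Andrews--Paule--Riese--Strehl derive Theorem~\ref{cayley} from that recurrence.
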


Theorem~\ref{cayley} is derived as a consequence of the following recurrence relation obtained via MacMahon's $\Omega$ calculus.

\begin{theorem}[Andrews--Paule--Riese--Strehl \cite{andrewspauleriesestrehlPA5}]\label{recurrence}
\[ 
f_{C_{j}} \left( z_1, \dots, z_{j-1} \right) = \frac{ z_{j-1} }{ 1 - z_{j-1} } \left( f_{ C_{j-1} } \left( z_1, \dots, z_{ j-2 } \right) - f_{ C_{j-1} } \left( z_1, \dots, z_{ j-3 }, z_{ j-2 } z_{j-1}^2 \right) \right) .
\]
\end{theorem}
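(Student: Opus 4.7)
My plan is to prove the recurrence by a direct fiber decomposition of $C_j$ over the projection that forgets the last coordinate, rather than by invoking any polyhedral machinery: since $\lambda_{j-1}$ is constrained only by $1 \le \lambda_{j-1} \le 2\lambda_{j-2}$, the fibers of the forgetful map $C_j \to C_{j-1}$ sending $(\lambda_1,\dots,\lambda_{j-1})$ to $(\lambda_1,\dots,\lambda_{j-2})$ are intervals whose length depends only on $\lambda_{j-2}$, so the generating function factors nicely over this fibration.

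First, I would write
\[
f_{C_j}(z_1,\dots,z_{j-1}) = \sum_{(\lambda_1,\dots,\lambda_{j-2}) \in C_{j-1}} z_1^{\lambda_1}\cdots z_{j-2}^{\lambda_{j-2}} \sum_{\lambda_{j-1}=1}^{2\lambda_{j-2}} z_{j-1}^{\lambda_{j-1}},
\]
and then evaluate the inner finite geometric series as
\[
\sum_{\lambda_{j-1}=1}^{2\lambda_{j-2}} z_{j-1}^{\lambda_{j-1}} = \frac{z_{j-1}}{1-z_{j-1}}\bigl(1 - z_{j-1}^{2\lambda_{j-2}}\bigr).
\]
Pulling the prefactor $z_{j-1}/(1-z_{j-1})$ out of the outer sum leaves two sums over $C_{j-1}$: the first is immediately $f_{C_{j-1}}(z_1,\dots,z_{j-2})$, while the second is
\[
\sum_{(\lambda_1,\dots,\lambda_{j-2}) \in C_{j-1}} z_1^{\lambda_1}\cdots z_{j-3}^{\lambda_{j-3}} \bigl(z_{j-2} z_{j-1}^{2}\bigr)^{\lambda_{j-2}},
\]
which is exactly $f_{C_{j-1}}(z_1,\dots,z_{j-3}, z_{j-2} z_{j-1}^2)$. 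Combining these gives the stated identity.

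There is essentially no obstacle to this proof: the only real ``trick'' is to recognize that the upper summation bound $2\lambda_{j-2}$ is precisely absorbed by the substitution $z_{j-2}\mapsto z_{j-2} z_{j-1}^2$ in the generating function of $C_{j-1}$, which is what makes the right-hand side look so clean. One minor point I would double-check is the boundary behavior for small $j$ (e.g.\ $j=2$, where $C_{j-1}=C_1$ is a single empty composition and the substitution involves $z_{j-3}$, which is vacuous); this amounts to interpreting $f_{C_1}$ as $1$ and verifying that the recurrence produces $z_1/(1-z_1)(1 - z_1)$-type factors matching the direct computation $f_{C_2}(z_1) = z_1 + z_1^2$. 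Once this base case is verified, the recurrence is unconditional.
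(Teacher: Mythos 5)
Your argument is correct and coincides with the paper's second proof of Theorem~\ref{recurrence}: both sum out the last coordinate as a finite geometric series $z_{j-1}+\cdots+z_{j-1}^{2\la_{j-2}}=\frac{z_{j-1}}{1-z_{j-1}}\bigl(1-z_{j-1}^{2\la_{j-2}}\bigr)$ and absorb the exponent $2\la_{j-2}$ via the substitution $z_{j-2}\mapsto z_{j-2}z_{j-1}^{2}$. Nothing further is needed.
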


Once this formula is obtained, the proof of Theorem~\ref{cayley} in \cite{andrewspauleriesestrehlPA5} proceeds by repeatedly iterating the recurrence, specialized to $f_{C_j}(1,\ldots,1,q)$.
The final step is to argue that the sum of rational functions in Theorem~\ref{cayley}, as analytic functions, must exhibit cancellation.
We remark that Corteel, Lee, and Savage \cite[Section 3]{corteelleesavage5guidelines} gave an alternative proof of Theorem~\ref{recurrence}.

Via geometry, we can shed light on the initial recurrence relation from three perspectives.
First, we recognize that the recurrence reflects expressing $C_j$ as a difference of two subspaces of $\R^{j-1}$ defined by linear constraints.

\begin{proof}[First proof of Theorem~\ref{recurrence}]
As a subspace of $\R^{j-1}$, $C_j=K_{1,j}\setminus K_{2,j}$ where
\[K_{1,j}:=\left\{ (x_1, \dots , x_{j-1}) \in \R^{j-1} : \, 1\leq x_1 \le 2, \ 1\leq x_{ i+1 } \le 2 x_i \ \text{ for } \ 1 \le i \le j-3, \text{ and } 1\leq x_{j-1} \right\}\]
and
\[K_{2,j}:=\left\{ (x_1, \dots , x_{j-1}) \in \R^{j-1} : \, 1\leq x_1 \le 2, \ x_{ i+1 } \le 2 x_i \, \text{ for } \, 1 \le i \le j-3, \ x_{j-1} > 2 x_{ j-2 } \right\} \, .\]
If we distribute the leading multiplier in the right-hand side of the recurrence for $f_{C_j}$, the first term is the generating function of $K_{1,j}$, as there are no restrictions on the size of $x_{j-1}$.
On the other hand, the integer points $\m \in K_{2,j}$ are precisely those in $K_{1,j}$ satisfying $x_{j-1}>2x_{j-2}$, which is equivalent to the condition that $\z^\m$ be divisible by $z_{j-2}z_{j-1}^2$.
The second term of the recurrence records precisely these integer points.
\end{proof}

Our second proof amounts to a simple observation regarding the integer-point transform of $C_j$.

\begin{proof}[Second proof of Theorem~\ref{recurrence}]
Since for any $\la\in C_j\cap \Z^{j-1}$ we have $1\leq \la_{j-1}\leq 2\la_{j-2}$,
\begin{align*}
f_{C_{j}}(z_1,\ldots,z_{j-1}) 
&= \sum_{\la\in C_{j}\cap \Z^{j-1}}\z^\la \\
&= \sum_{\la\in C_{j-1}\cap \Z^{j-2}} \z^\la(z_{j-1}+z_{j-1}^2+\cdots+z_{j-1}^{2\la_{j-2}}) \\
&= z_{j-1} \sum_{\la \in C_{j-1}\cap \Z^{j-2}} \z^\la \frac{1-z_{j-1}^{2\la_{j-2}}}{1-z_{j-1}} \\
&= \frac{z_{j-1}}{1-z_{j-1}}\sum_{\la \in C_{j-1}\cap \Z^{j-2}} \z^\la - \z^\la z_{j-1}^{2\la_{j-2}} \\
&= \frac{z_{j-1}}{1-z_{j-1}} \left( f_{C_{j-1}}(z_1,\ldots,z_{j-1})-f_{C_{j-1}}(z_1,\ldots,z_{j-3},z_{j-2}z_{j-1}^2) \right) . \qedhere
\end{align*}
\end{proof}

Following their statement of Theorem~\ref{cayley}, the authors of \cite{andrewspauleriesestrehlPA5} make the following comment:
\begin{quote}
It hardly needs to be pointed out that [this formula] is a surprising representation of a polynomial.
Indeed, the right-hand side does not look like a polynomial at all.
\end{quote}
Such a statement suggests that Brion's formula \cite{brion} for rational polytopes is lurking in the background; our third proof of Theorem~\ref{recurrence} is based on this formula.
Given a rational convex polytope $P$, we first define the \emph{tangent cone} at a vertex $\v$ of $P$ to be
\[
  T_P(\v) := \left\{ \v+\alpha (\p-\v) : \, \alpha \in \R_{ \ge 0 } , \, \p \in P \right\} .
\]
\begin{theorem}[Brion]
Suppose $P$ is a rational convex polytope.
Then we have the following identity of rational generating functions:
\[
\sigma_{P}(\z)=\sum_{\v \text{ \rm a vertex of } P}\sigma_{T_P(\v)}(\z) \, .
\]
\end{theorem}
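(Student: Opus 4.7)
The plan is to prove Brion's theorem by the classical Lawrence--Varchenko ``flipped cone'' decomposition, which reduces the formula to a verifiable identity of indicator functions plus one valuation step. I would organize the argument in three stages.

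First, I would set up the valuation framework. For any line-free rational polyhedron $K \subseteq \R^n$, the integer-point transform $\sigma_K(\z)$ can be written as a rational function in $z_1,\dots,z_n$ by triangulating $K$ into partially open unimodular cones and summing the expressions in Lemma~\ref{conegenfctlemma}. The central fact I need is that this assignment is a \emph{valuation}: whenever $\sum_i a_i \mathbf{1}_{K_i} \equiv 0$ is a finite linear relation among indicator functions of line-free rational polyhedra, the corresponding relation $\sum_i a_i \sigma_{K_i}(\z) = 0$ holds in the field $\C(z_1,\dots,z_n)$. This is the Lawrence/Khovanskii--Pukhlikov valuation theorem, and it is the correct framework for manipulating integer-point transforms of unbounded polyhedra.

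Second, I would establish the Lawrence--Varchenko identity of indicator functions. Pick a vector $\xi \in \R^n$ so that $\langle \xi, \cdot \rangle$ separates the vertices of $P$ and is nonzero on every edge direction of $P$. For each vertex $\v$ of $P$ and each primitive edge generator $\mathbf{u}_j^{\v}$ of the tangent cone $T_P(\v)$, flip $\mathbf{u}_j^{\v}$ to $-\mathbf{u}_j^{\v}$ whenever $\langle \xi, \mathbf{u}_j^{\v} \rangle > 0$, and collect the rays into a half-open ``Lawrence cone'' $\widetilde{T}_\v$, open exactly on the facets spanned by flipped generators. Writing $n_\v$ for the number of flipped edges at $\v$, the key geometric claim is
\[
  \mathbf{1}_P(\p) \;=\; \sum_{\v \text{ vertex of } P} (-1)^{n_\v}\, \mathbf{1}_{\widetilde{T}_\v}(\p)
\]
for every $\p \in \R^n$. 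I would prove this pointwise: fix $\p$, consider for which vertices $\v$ one has $\p \in \widetilde{T}_\v$, and organize the contribution along the edge graph of $P$ oriented by $\xi$; a standard sign-telescoping argument shows that the right-hand side collapses to $1$ if $\p \in P$ and to $0$ otherwise.

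Third, I would translate the previous stage into rational generating functions. Applying the valuation from stage one gives $\sigma_P(\z) = \sum_{\v} (-1)^{n_\v}\, \sigma_{\widetilde{T}_\v}(\z)$. Because $\widetilde{T}_\v$ differs from $T_P(\v)$ only in that some generators have been negated (with the matching half-open convention on facets), the formal Laurent identity $\frac{z^{-\mathbf{u}}}{1-z^{-\mathbf{u}}} = -\frac{1}{1-z^{\mathbf{u}}}$, applied inside the product formula of Lemma~\ref{conegenfctlemma} for each flipped direction, shows that $(-1)^{n_\v}\, \sigma_{\widetilde{T}_\v}(\z) = \sigma_{T_P(\v)}(\z)$ in $\C(z_1,\dots,z_n)$. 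Summing over vertices yields Brion's identity. The main obstacle is the very first stage: the series defining $\sigma_{T_P(\v)}(\z)$ for different vertices converge on generally disjoint open regions of $\C^n$, so the desired identity admits no pointwise interpretation as a sum of convergent power series. The valuation theorem is precisely what converts the purely combinatorial indicator-function identity of stage two into a clean identity of rational functions, and justifying this framework rigorously is where the real work lies.
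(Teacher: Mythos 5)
The paper does not actually prove this statement: it is quoted as Brion's theorem with a citation to \cite{brion} and then used as a black box in the third proof of Theorem~\ref{recurrence}. So there is no internal proof to compare against, and your proposal should be judged on its own as a proof of a cited classical result. On those terms, the architecture you describe --- the Lawrence--Varchenko signed tiling of $P$ by polarized half-open tangent cones, followed by the Lawrence/Khovanskii--Pukhlikov valuation property of $K \mapsto \sigma_K(\z)$, followed by the per-flip identity $z^{-\mathbf{u}}/(1-z^{-\mathbf{u}}) = -1/(1-z^{\mathbf{u}})$ --- is a standard and correct route to Brion's formula, and your sign bookkeeping (the factor $(-1)^{n_\v}$ cancelling against one sign per flipped generator) checks out, e.g.\ on the interval $[a,b]$ in $\R^1$.

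Two caveats keep this from being a complete proof rather than a correct outline. First, your stage two is phrased for simple polytopes: at a non-simple vertex the tangent cone is not simplicial, there are more edge generators than the dimension, and ``the facets spanned by flipped generators'' and the product formula of Lemma~\ref{conegenfctlemma} do not apply directly; you need to triangulate each tangent cone (using only its own rays, with a half-open convention to avoid overcounting) before flipping, or else prove the indicator identity in a triangulation-free form. Second, the two load-bearing ingredients --- the valuation theorem of stage one and the pointwise sign-telescoping of stage two --- are each invoked rather than proved, and you correctly identify the first as ``where the real work lies''; the second is not much lighter, since the claimed collapse of $\sum_\v (-1)^{n_\v}\mathbf{1}_{\widetilde{T}_\v}(\p)$ to $\mathbf{1}_P(\p)$ for $\p \notin P$ requires a genuine argument (typically a sweep/Morse-type induction on the functional $\xi$), not just an appeal to the edge graph. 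With those two pieces supplied, the proof is complete.
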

Note that the sum on the right-hand side is a sum of rational functions, while the left-hand side yields a polynomial.

\begin{proof}[Third proof of Theorem~\ref{recurrence}]
To interpret the recurrence as a consequence of Brion's formula, we first assume that the $f_{C_{j-1}}$'s are expressed in the form of the right-hand side of Brion's formula, i.e., as a sum of integer-point transforms of the tangent cones at the vertices of $C_{j-1}$.
We next rewrite the recurrence as
\[ 
f_{C_{j}} \left( z_1, \dots, z_{j-1} \right) = \frac{ z_{j-1} }{ 1 - z_{j-1} } \, f_{ C_{j-1} } \left( z_1, \dots, z_{ j-2 } \right) + \frac{1}{1 - z_{j-1}^{-1}} \, f_{ C_{j-1} } \left( z_1, \dots, z_{ j-3 }, z_{ j-2 } z_{j-1}^2 \right) .
\]
The polytope $C_j$ is a combinatorial cube; this can be easily seen by induction on $j$ after observing that in $C_{j-1}\times \R$ the hyperplanes $x_{j-1}=1$ and $x_{j-1}=2x_{j-2}$ do not intersect.
Thus, the tangent cones for vertices of $C_j$ can be expressed in terms of the tangent cones for vertices of $C_{j-1}$.
Given a vertex $\v=\{v_1,\ldots,v_{j-2}\}$ of $C_{j-1}$, the two vertices of $C_j$ obtained from $\v$ are $(\v,1)$ and $(\v,2v_{j-2})$.
For the vertex $(\v,1)$ in $C_j$, it is immediate that 
\[
\sigma_{T_{C_{j-1}}((\v,1))}(\z)=\frac{1}{1-z_{j-1}}\, \sigma_{T_{C_{j-2}}(\v)}(\z) \, .
\]
Our proof will be complete after we show that for the vertex $(\v,2v_{j-2})$ in $C_j$,
\[
\sigma_{T_{C_{j-1}}((\v,2v_{j-2}))}(\z)=\frac{1}{1-z_{j-1}^{-1}}\, \sigma_{T_{C_{j-2}}(\v)}(z_1,\ldots,z_{j-3},z_{j-2}z_{j-1}^2) \, .
\]
This follows from the fact that the edges in $C_j$ emanating from $(\v,2v_{j-2})$ terminate in the vertex $(\v,1)$ and in the vertices $(\w,2w_{j-2})$ for vertices $\w$ of $C_{j-1}$ that are connected to $\v$ by an edge in $C_{j-1}$.
Thus, Theorem~\ref{recurrence} follows from Brion's formula and induction.
\end{proof}

There is an interesting remark about Theorem~\ref{cayley} and Brion's formula; while one might hope that the expression in Theorem~\ref{cayley} is obtained by directly specializing Brion's formula to $z_1=\cdots=z_{j-2}=1$ and $z_{j-1}=q$, this is not the case.
This specialization is not actually possible, as some of the rational functions for tangent cones in $C_j$ have denominators that lack a $z_{j-1}$ variable, and hence this specialization would require evaluating rational functions at poles.
The authors of \cite{andrewspauleriesestrehlPA5} use the recurrence in Theorem~\ref{recurrence} in a more subtle way, in that they first specialize the recurrence to 
\[ 
f_{C_{j}} \left( 1, \dots, 1,q \right) = \frac{ q }{ 1 - q } \left( f_{ C_{j-1} } \left( 1, \dots, 1 \right) - f_{ C_{j-1} } \left( 1, \dots, 1, q^2 \right) \right) 
\]
and then iterate the recurrence.
In doing this, they simultaneously use the interpretation of $f_{C_j}(\z)$ as a polynomial (for the all-ones specialization) and also the interpretation of $f_{C_j}(\z)$ as a rational function (for the specialization involving $q^2$).
Thus, while Theorem~\ref{cayley} looks similar to a Brion-type result, it is obtained differently.
We remark that by specializing $z_1=\cdots=z_{j-1}=q$ in Brion's formula for $C_j$, one would obtain a representation of the polynomial $f_{C_j}(q,\ldots,q)$ as a sum of rational functions of~$q$.


\section{Directions for further investigation}\label{hypersimplices}

\subsection{Cones over hypersimplices}

We can view the cone $K$ of the previous section as a cone over a ``half-open'' version of the second \emph{hypersimplex}
\[
  \Delta(2,n) := \left\{ (x_1,\ldots,x_n)\in [0,1]^n : \, \sum_{i=1}^nx_i=2 \right\} ,
\]
in the following manner.
The linear inequality $x_j\geq x_1+\cdots +\widehat{x_j}+ \cdots + x_n$ is equivalent to $\frac{\sum_{i=1}^nx_i}{2}\leq x_j$. When $\sum_{i=1}^nx_i=1$, we are considering the ``slice'' of $K$ that is constrained by $0<x_j\leq \frac{1}{2}$ and $\sum_{i=1}^nx_i=1$, which is $\frac{1}{2}$ of $\Delta(2,n)$ with the condition that $0<x_j$ for all $j$.
From this perspective, we can view the $n$-gon compositions of $t$ as 
\begin{align*}
  H(t)
  &:= \left\{ (\la_1, \dots , \la_n) \in \Z_{ \ge 0 }^n :
  \begin{array}{l}
    \la_1 + \dots + \la_n = t \, , \\
    \la_j \le \la_1 + \dots + \widehat{ \la_j } + \dots + \la_n \ \text{ for all } 1 \le j \le n
  \end{array} \right\} \\
  &= \left\{ (\la_1, \dots , \la_n) \in \Z^n :
  \begin{array}{l}
    \la_1 + \dots + \la_n = t \, , \\
    0 \le \la_j \le \frac t 2 \ \text{ for all } 1 \le j \le n
  \end{array} \right\} .
\end{align*}
The second hypersimplex is a well-studied object; for example, in matroid theory $\Delta(2,n)$ is the matroid basis polytope for the $2$-uniform matroid on $n$ vertices, while in combinatorial commutative algebra $\Delta(2,n)$ is the subject of~\cite[Chapter 9]{sturmfels}.

It would be interesting to consider analogues of Theorem~\ref{hermitethm} for the general case of the $k\th$ hypersimplex $\Delta(k,n):=\{(x_1,\ldots,x_n)\in [0,1]^n : \sum_{i=1}^nx_i=k\}$.
The associated composition counting function has a natural interpretation: in
\[
  \left\{ (\la_1, \dots , \la_n) \in \Z^n :
  \begin{array}{l}
    \la_1 + \dots + \la_n = t \, , \\
    0 \le \la_j \le \frac t k \ \text{ for all } 1 \le j \le n
  \end{array} \right\}
\]
are all compositions of $t$ whose parts are at most $\frac t k$ (i.e., the parts are not allowed to be too large, where ``too large" depends on~$k$).


\subsection{Cayley polytopes}

We refer to the polytopes $C_j$ from Section~\ref{cayleycomp} as \emph{Cayley polytopes}.
By taking a geometric view of Cayley compositions as integer points in $C_j$, we may shift our focus from combinatorial properties of the integer points to properties of $C_j$ itself.
Recall that the \emph{normalized volume} of $C_j$ is
\[
  \mathrm{Vol}(C_j):=(j-1)! \, \mathrm{vol}(C_j) \, ,
\]
where $\mathrm{vol}(C_j)$ is the Euclidean volume of $C_j$.
Based on experimental data obtained using the software LattE \cite{koeppelatte} and the Online Encyclopedia of Integer Sequences \cite{sloaneonlineseq}, we make the following conjecture:

\begin{conjecture}\label{cayleyconj}
For $j\geq 2$, $\mathrm{Vol}(C_j)$ is equal to the number of labeled connected graphs on $j-1$ vertices.\footnote{
After a preprint of the current article was made public, Matjaz Konvalinka and Igor Pak communicated to us that they resolved Conjecture \ref{cayleyconj} by a direct combinatorial argument.
}
\end{conjecture}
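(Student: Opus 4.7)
\medskip
The plan is to reduce $\mathrm{Vol}(C_j)$ to an explicit polynomial sum and identify it with the logarithmic derivative of the exponential generating function $G(x) := \sum_{n \ge 0} 2^{\binom{n}{2}} x^n/n!$ for labeled graphs --- equivalently, with the EGF for labeled connected graphs.  (Numerical evidence actually suggests the matching is with connected graphs on $j$ vertices, a mild reindexing of the conjecture as stated.)

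First, the fibration $C_j \to C_{j-1}$ that forgets $\la_{j-1}$ has fiber $[1, 2\la_{j-2}]$ over $(\la_1, \dots, \la_{j-2}) \in C_{j-1}$.  Setting $F_1 \equiv 1$ and
\[
  F_j(\la) \;:=\; \int_1^{2\la} F_{j-1}(u)\, du,
\]
we have $\mathrm{vol}(C_j) = F_j(1)$, and by the chain rule $F_j'(\la) = 2\, F_{j-1}(2\la)$; an easy induction shows $F_j$ is a polynomial of degree $j-1$ with leading coefficient $2^{\binom{j}{2}}/(j-1)!$.  Writing $F_j(\la) = \sum_{k=0}^{j-1} a_{j,k} \la^k$ and matching coefficients in $F_j'(\la) = 2\, F_{j-1}(2\la)$ forces $a_{j,k} = \frac{2^{\binom{k+1}{2}}}{k!}\, a_{j-k,0}$ for $1 \le k \le j-1$, while the constant term $a_{j,0} = F_j(0) = -\int_0^1 F_{j-1}(u)\, du$ is pinned down by the lower integration limit.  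Summing at $\la = 1$ and re-indexing by $\ell = j-k$ collapses everything to
\[
  \mathrm{Vol}(C_j) \;=\; \sum_{\ell=1}^{j} \binom{j-1}{\ell-1}\, 2^{\binom{j-\ell+1}{2}}\, E_\ell , \qquad E_\ell := (\ell-1)!\, F_\ell(0).
\]

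Bundling the $F_j$'s into the generating function $\Phi(\la, x) := \sum_{j \ge 1} F_j(\la)\, x^{j-1}$, the recurrence becomes $\partial_\la \Phi(\la, x) = 2x\, \Phi(2\la, x)$ with $\Phi(\la, 0) = 1$; the closed form above repackages, via $\binom{m+1}{2} = m + \binom{m}{2}$, into the product formula $\Phi(1, x) = \Phi(0, x) \cdot G(2x)$.  The conjecture thus amounts to the analytic identity $\Phi(1, x) = K'(x) = G'(x)/G(x)$, i.e.,
\[
  \Phi(0, x) \;=\; \frac{G'(x)}{G(x)\, G(2x)} \, .
\]
As a second constraint on $\Phi(0, x)$, the relation $F_j(0) = -\int_0^1 F_{j-1}(u)\, du$ yields the backwards integration identity $\Phi(0, x) = 1 - x \int_0^1 \Phi(u, x)\, du$.

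The main obstacle is proving the target identity for $\Phi(0, x)$: it requires combining the functional-differential equation for $\Phi$ with the backwards integration identity to derive an ODE that matches the one satisfied by $G'(x)/(G(x)\, G(2x))$.  An alternative is to give the coefficients $F_j(0)$ an independent combinatorial interpretation --- plausibly as a signed count of pointed disconnected graphs on $j$ vertices --- so that the identity becomes the standard exponential formula $K = \log G$.  Should this analytic route prove too indirect, the combinatorial fallback --- apparently the one taken by Konvalinka and Pak, per the footnote --- is to exhibit a unimodular triangulation of $C_j$ whose top-dimensional simplices biject directly with labeled connected graphs.
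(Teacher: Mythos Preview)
The paper does \emph{not} prove this statement: it is recorded as a conjecture, supported only by LattE computations and an OEIS lookup, with a footnote noting that Konvalinka and Pak subsequently resolved it.  So there is no ``paper's own proof'' to compare against; your proposal stands or falls on its own.

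As a strategy, your setup is sound and the preliminary computations are correct.  The fibration $C_j\to C_{j-1}$ does give $\mathrm{vol}(C_j)=F_j(1)$ with $F_j(\la)=\int_1^{2\la}F_{j-1}$; the relation $F_j'(\la)=2F_{j-1}(2\la)$ and the coefficient recursion $a_{j,k}=\tfrac{2^{\binom{k+1}{2}}}{k!}a_{j-k,0}$ both check, and the product factorization $\Phi(1,x)=\Phi(0,x)\,G(2x)$ is a genuine simplification.  You are also right that the indexing in the conjecture is off by one: with the paper's $C_j\subset\R^{j-1}$ one gets $\mathrm{Vol}(C_3)=4$ and $\mathrm{Vol}(C_4)=38$, matching connected graphs on $j$ (not $j-1$) vertices, in agreement with Konvalinka--Pak.

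That said, what you have is a reformulation, not a proof.  The entire content of the conjecture has been pushed into the identity $\Phi(0,x)=G'(x)/\bigl(G(x)\,G(2x)\bigr)$, and you do not prove it.  The functional-differential equation $\partial_\la\Phi(\la,x)=2x\,\Phi(2\la,x)$ by itself does not pin down $\Phi(0,x)$ (it determines $\Phi(\cdot,x)$ only up to the unknown sequence $a_{j,0}$), and the backwards constraint $\Phi(0,x)=1-x\int_0^1\Phi(u,x)\,du$ couples $\Phi(0,x)$ to all values $\Phi(u,x)$ with $0\le u\le 1$, which in turn depend on $\Phi$ at arguments up to $2$ via the doubling in the PDE.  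You have not shown how to close this into a tractable ODE, nor supplied the promised combinatorial interpretation of $F_j(0)$.  Until one of those two routes is actually carried out, this remains an outline with the key step missing---which you yourself flag.  The Konvalinka--Pak argument you allude to as a fallback proceeds by an explicit bijection with labeled connected graphs via a triangulation, and is a complete proof; your analytic route may well work, but as written it is not one.
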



\bibliographystyle{amsplain}
\bibliography{bib}

\end{document}